\tikzset{cd/.style=matrix of math nodes}
\title{Algebras of distributions suitable for 
       phase-space quantum \\ mechanics.
       II. Topologies on the Moyal algebra}
\author{Joseph C. Várilly and José M. Gracia-Bondía \\[12pt]
{\small
Escuela de Matemática, Universidad de Costa Rica,
11501 San José, Costa Rica}}
\date{J. Math.\ Phys. \textbf{29} (1988), 880--887}
\newcommand{\Dl}{\Delta}            %% propagator
\newcommand{\dl}{\delta}            %% Kronecker delta
\newcommand{\eps}{\varepsilon}      %% short for \varepsilon
\newcommand{\Ga}{\Gamma}            %% short for \Gamma
\newcommand{\om}{\omega}            %% short for \omega
\newcommand{\ze}{\zeta}             %% short for \zeta
\newcommand{\bC}{\mathbb{C}}
\newcommand{\bN}{\mathbb{N}}
\newcommand{\bR}{\mathbb{R}}
\newcommand{\one}{\mathbb{1}}         %% constant function 1
\newcommand{\HS}{\mathcal{HS}}
\newcommand{\sB}{\mathcal{B}}
\newcommand{\sD}{\mathcal{D}}
\newcommand{\sE}{\mathcal{E}}       %% smooth functions
\newcommand{\sG}{\mathcal{G}}
\newcommand{\sH}{\mathcal{H}}       %% Hilbert space
\newcommand{\sI}{\mathcal{I}}
\newcommand{\sK}{\mathcal{K}}
\newcommand{\sL}{\mathcal{L}}
\newcommand{\sM}{\mathcal{M}}       %% Moyal algebras
\newcommand{\sO}{\mathcal{O}}
\newcommand{\sS}{\mathcal{S}}       %% Schwartz space
\newcommand{\sT}{\mathcal{T}}
\newcommand{\sW}{\mathcal{W}}
\bmdefine{\sss}{s}                  %% sequence space s
\renewcommand{\geq}{\geqslant}      %% mayor o igual (variante)
\renewcommand{\leq}{\leqslant}      %% menor o igual (variante)
\newcommand{\ovl}{\overline}        %% short for `overline'
\newcommand{\ox}{\otimes}           %% tensor product
\newcommand{\stroke}{\mathbin|}
\newcommand{\wt}{\widetilde}        %% short for  \widetilde
\newcommand{\x}{\times}             %% cartesian product or cross
\renewcommand{\:}{\colon}           %% 2-puntos en  f: A -> B
\newcommand{\Wbar}{\overline{W}}
\newcommand{\hatox}{\mathbin{\widehat\otimes}}
\newcommand{\half}{{\mathchoice{\thalf}{\thalf}{\shalf}{\shalf}}}
\newcommand{\shalf}{{\scriptstyle\frac{1}{2}}} %% tiny* fraction  1/2
\newcommand{\thalf}{\tfrac{1}{2}}   %% small* fraction  1/2
\newcommand{\hideqed}{\renewcommand{\qed}{}} %% to suppress `\qed'
\newcommand{\set}[1]{\{\,#1\,\}}     %% set notation
\newcommand{\word}[1]{\quad\text{#1}\quad} %% well-spaced word(s)
\newcommand{\duo}[2]{\langle#1,#2\rangle} %% bilinear duality <T,f>
\newcommand{\ketbra}[2]{\lvert#1\rangle\langle#2\rvert}
\newcommand{\pd}[2]{\frac{\partial#1}{\partial#2}} %% partial deriv
\newcommand{\scal}[2]{(#1\stroke#2)} %% scalar product (x|y)
\theoremstyle{plain}
\newtheorem{thm}{Theorem}             %% Theorem 1
\newtheorem{lema}{Lemma}              %% Lemma 1
\theoremstyle{definition}
\theoremstyle{remark}
\newtheorem{remk}{Remark}             %% Remark 1
\renewcommand{\section}{\@startsection{section}{1}{\z@}%
							 {-3.25ex \@plus -1ex \@minus -.2ex}%
							 {1.5ex \@plus.2ex}%
							 {\normalfont\large\bfseries}}
\renewcommand{\subsection}{\@startsection{subsection}{2}{\z@}%
							 {-3.25ex \@plus -1ex \@minus -.2ex}%
							 {1.5ex \@plus .2ex}%
							 {\normalfont\normalsize\bfseries}}
\begin{document}

\maketitle

\begin{abstract}
The topology of the Moyal $*$-algebra may be defined in three ways:
the algebra may be regarded as an operator algebra over the space of
smooth declining functions either on the configuration space or on the
phase space itself; or one may construct the $*$-algebra~via a
filtration of Hilbert spaces (or other Banach spaces) of
distributions. We prove the equivalence of the three topologies
thereby obtained. As a consequence, by filtrating the space of
tempered distributions by Banach subspaces, we give new sufficient
conditions for a phase-space function to correspond to a trace-class
operator via the Weyl correspondence rule.
\end{abstract}

\section{Introduction} % 1

In the previous article \cite{Phobos} (hereinafter referred to simply
as~\textbf{I}), we laid the foundations of a promising mathematical
mold for the phase-space formulation of quantum mechanics. In this
paper we obtain some less straightforward results, in keeping with the
preliminary stage of the program outlined in~\textbf{I}.

A wealth of information may be gained by characterizing the Moyal
$*$-algebra $\sM$ as a suitably defined limit of a family of Banach
spaces, which form a filtration of the space of tempered distributions
on phase-space. In fact, we introduce several variants of this
filtration, depending on whether to use Hilbert algebras or some other
kind of Banach algebras.

The definition of an adequate topology on $\sM$ is obviously of great
importance. For physical reasons, we should in principle consider two
topologies on $\sM$, depending on whether we wish to link the theory
with ordinary quantum mechanics, or to study the dynamics in $\sM$ in
its natural context. A third topology on $\sM$ is given by the
filtration. We will show that these three topologies are equivalent.
	
The paper is organized as follows. In Sec.~\ref{sec:kernel-product} we
review the twisted product and the integral transformation of Wigner,
which intertwines the twisted product with the composition of kernel
functions. We show how this transformation and the kernel theorem
establish a link between $\sM$ and the algebra $\sL_b(\sS_1)$. We also
sketch how the Wigner transformation and the twisted product yield a
constructive proof of the Stone--von~Neumann theorem. In
Sec.~\ref{sec:operator-topologies} we introduce two topologies
on~$\sM$, regarding $\sM$ first as an operator algebra over a space of
test functions on configuration space, and secondly as an operator
algebra over functions on phase space; and we prove the equivalence of
these two topologies. In Sec.~\ref{sec:many-filtrations} we introduce
the filtrations of the space of tempered distributions (on phase
space) and characterize $\sM$ in terms of these filtrations. From this
characterization, we obtain the third topology on~$\sM$, and we prove
its equivalence with the previous two. In Sec.~\ref{sec:trace-class},
we obtain conditions which imply that certain functions on phase space
correspond to trace-class operators in the usual formulation of
quantum mechanics.

\section{Twisted products and the kernel theorem} % 2
\label{sec:kernel-product}

In the usual approach to phase-space quantum mechanics, via the Weyl
correspondence between functions and operators
\cite{AmietH81,Anderson72,Pool66}, $L^2$~functions correspond to
Hilbert--Schmidt operators; since these have $L^2$~kernels, we may
relate the twisted product to the composition of integral kernels by
some transformation of $L^2(\bR^{2N})$ onto itself. This transformation
turns out to be the prescription introduced by
Wigner~\cite{Wigner32,Phoebe} to associate a ``distribution function''
on phase space to a Schr\"odinger wave function. Moreover, it maps the
twisted Hermite basis of~\textbf{I} onto the ordinary Hermite basis
for $L^2(\bR^{2N})$. Also, we can build on the observation by
Cressman~\cite{Cressman76} that the Wigner transformation allows us to
transfer the kernel theorem to the twisted product calculus, and in
this way we identify the Moyal $*$-algebra $\sM$ in terms of more
familiar spaces.

We use the same notations as~\textbf{I}. Also, we write
$\sS_1 = \sS(\bR)$ and $\sS_2 = \sS(\bR^2)$ to denote the spaces of 
rapidly decreasing smooth functions on $\bR$ and~$\bR^2$, and 
$\sS'_1$, $\sS'_2$ for their dual spaces of tempered distributions. 
When $\phi,\psi \in \sS_1$, we define $\phi \ox \psi \in \sS_2$ by 
$(\phi \ox \psi)(q,p) := \phi(q)\psi(p)$. If $E,F$ are locally convex
spaces, $\sL(E,F)$ will denote the space of continuous linear maps
$\: E \to F$, which we abbreviate to $\sL(E)$ in the case $E = F$.

If $f,g \in \sS_2$ [or if $f,g \in L^2(\bR^2)$], we write
$$
(f \circ g)(x,y) := \frac{1}{\sqrt{4\pi}} \int_\bR f(x,z) g(z,y) \,dz
$$
which is the ``kernel product'' of $f$ and~$g$. We also introduce the
maps $R$, $\Phi$, $W$ from $\sS_2$ onto $\sS_2$ by
\begin{align*}
(Rf)(x,y) 
&:= \biggl( \frac{x+y}{\sqrt{2}}, \frac{x-y}{\sqrt{2}} \biggr)
\\
(\Phi f)(x,y) &:= \frac{1}{\sqrt{2\pi}} \int_\bR f(x,z) e^{-iyz} \,dz
\\
(Wf)(x,y) &:= (R\Phi^{-1}f)(x,y) = \frac{1}{\sqrt{2\pi}} \int_\bR
f\biggl( \frac{x+y}{\sqrt{2}}, z \biggr) \,e^{i(x-y)z/\sqrt{2}} \,dz.
\end{align*}
Clearly $R$ and the ``partial Fourier transform'' $\Phi$, and hence 
$W$, are Fréchet-space isomorphisms of $\sS_2$ onto itself, and 
extend to unitary operators on $L^2(\bR^2)$. We call $W$ the
\textit{Wigner transformation} on $\sS_2$. It intertwines the twisted 
product and the kernel product on $\sS_2$:
\begin{equation}
W(f \x g) = Wf \circ Wg.
\label{eq:inter-twine} % (1)
\end{equation}
Indeed, since $W^{-1} = \Phi R^{-1} = \Phi R$, a straightforward
computation shows that $W^{-1}(Wf \circ Wg) = f \x g$ for
$f,g \in \sS_2$. The identity~\eqref{eq:inter-twine} gives the
connection between the Weyl operator formalism and the twisted product
calculus.

Indeed, more is true: if the set $\set{f_{mn} : m,n \in \bN}$ denotes 
the ``twisted Hermite basis'' of $\sS_2$ discussed in~\textbf{I}:
\begin{equation}
f_{mn}(q,p) = 2(-1)^n \sqrt{\frac{n!}{m!}} (q-ip)^{m-n} 
L_n^{m-n}(q^2+p^2) e^{-(q^2+p^2)/2}
\label{eq:matrix-basis} % (2)
\end{equation}
(if $m \geq n$; $f_{mn} := f_{nm}^*$ otherwise); and if $h_m \in \sS_1$
denotes the Hermite function
$$
h_m(x) := \frac{1}{\sqrt{2^{m-1}m!}} H_m(x) e^{-x^2/2},
$$
then a direct calculation shows that
\begin{equation}
W(f_{mn}) = h_m \ox h_n.
\label{eq:bases-matching} % (3)
\end{equation}

We may extend $W$ to $\sS'_2$ by duality in the usual way. Indeed,
writing $\Wbar := R\Phi$, we find
$$
\duo{Wf}{g} = \duo{R\Phi^{-1}f}{g} = \duo{\Phi^{-1}f}{Rg}
= \duo{f}{\Phi^{-1}Rg} = \duo{f}{\Wbar^{-1}g} \word{for} f,g\in \sS_2,
$$
so we define $WT$ for $T \in \sS'_2$ by
$$
\duo{WT}{h} := \duo{T}{\Wbar^{-1}h}  \word{for}  h \in \sS_2.
$$
Similarly, we may define $\duo{\Wbar T}{h} := \duo{T}{W^{-1}h}$. Since
$\Wbar^{-1} \: \sS_2 \to \sS_2$ is a topological isomorphism, so is
its transpose $W \: \sS'_2 \to \sS'_2$ (where $\sS'_2$ carries the
strong dual topology~\cite{Horvath66,Treves67}).

Now $\sS_2$ acts on $\sS_1$ by
$$
(f \cdot \phi)(x) 
:= \frac{1}{\sqrt{4\pi}} \int_\bR f(x,y) \phi(y) \,dy.
$$
Note that
$\duo{f \cdot \phi}{\psi} = \frac{1}{\sqrt{2}} \duo{f}{\psi \ox \phi}$
for $f \in \sS_2$, $\phi,\psi \in \sS_1$. Thus we may define 
$T \cdot \phi$, for $T \in \sS'_2$, $\phi \in \sS_1$, by
transposition:
$$
\duo{T \cdot \phi}{\psi} := \frac{1}{\sqrt{2}} \duo{T}{\psi \ox \phi}.
$$

\begin{remk} % 1 
We also observe that
\begin{equation}
\scal{\phi}{WT \cdot \psi}
= \frac{1}{2} \duo{T}{\Wbar^{-1}(\phi^* \ox \psi)}. 
\label{eq:Wigner-transform} % (4)
\end{equation}
Indeed,
$$
\scal{\phi}{WT \cdot \psi}
= \frac{1}{\sqrt{2}} \duo{\phi^*}{WT \cdot \psi}
= \frac{1}{2} \duo{WT}{\phi^* \ox \psi} 
= \frac{1}{2} \duo{T}{\Wbar^{-1}(\phi^* \ox \psi)}.
$$
The identity \eqref{eq:Wigner-transform} is Moyal's connection between
the Weyl operator formalism and the calculus of Wigner functions. We
may interpret it thus: to calculate the transition probabilities for
an observable $T$ between pure states represented state vectors
$\psi,\phi$, one may compute the scalar product of the operator
$WT\cdot$ with the ket $|\psi\rangle$ and the bra $\langle\phi|$ (in
Dirac's terminology), or equivalently one may take the expected value
of $T$ with respect to the Wigner ``distribution function''
$$
\Wbar^{-1}(\phi^* \ox \psi)
= \int_\bR \phi^* \biggl( \frac{q+t}{\sqrt{2}} \biggr) \psi
\biggl( \frac{q-t}{\sqrt{2}} \biggr) e^{ipt} \,dt.
$$
\end{remk}

Furthermore, if $f,g,h \in \sS_2$ and if $\tilde f(q,p) := f(p,q)$,
then
$$
\duo{f\circ g}{h} = \duo{f}{h \circ \tilde g}
= \duo{g}{\tilde f \circ h} 
= \frac{1}{\sqrt{2}} \iiint f(q,t) g(t,p) h(q,p) \,dt \,dq \,dp
$$
so $\duo{T \circ f}{h} := \duo{T}{h \circ \tilde f}$ defines
$T \circ f$ by transposition, for $T \in \sS'_2$, $f \in \sS_2$.

\begin{lema} % 1
\label{lm:Wigner-homom}
If $T \in \sS'_2$, $f \in \sS_2$, and $\phi,\psi \in \sS_1$, then
\begin{enumerate}
\item[\textup{(i)}]
$W(T \x f) = WT \circ Wf$; 
\item[\textup{(ii)}]
$T \circ (\phi \ox \psi) = (T \cdot \phi) \ox \psi$.
\end{enumerate}
\end{lema}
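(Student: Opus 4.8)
\emph{Proof plan.} The plan is to lift both identities from $\sS_2$ to $\sS'_2$ by exploiting that every operation in sight---the Wigner transformation $W$, the twisted product $T \x f$, the kernel product $T \circ f$, the action $T \cdot \phi$, and the tensor products---has been defined on distributions as the transpose of a continuous linear operation on the test-function spaces $\sS_2$ or $\sS_1$. Such transposes are automatically weak-$*$ continuous, and $\sS_2$ is dense in $\sS'_2$; so it suffices to verify each identity on the dense subspace $\sS_2$ and then pass to the limit.

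For (i), when $T = g \in \sS_2$ the claim $W(g \x f) = Wg \circ Wf$ is precisely the intertwining identity \eqref{eq:inter-twine}. The two maps $T \mapsto W(T \x f)$ and $T \mapsto WT \circ Wf$ are both weak-$*$ continuous from $\sS'_2$ to $\sS'_2$, being composites of the transpose maps $T \mapsto T \x f$, $T \mapsto WT$ and $S \mapsto S \circ Wf$. Since they agree on the dense subspace $\sS_2$, they agree on all of $\sS'_2$. Alternatively, one may pair $W(T \x f)$ against an arbitrary $h \in \sS_2$, unwind the three transposition definitions in turn, and reduce the resulting identity of test functions directly to \eqref{eq:inter-twine}.

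For (ii), I would follow the same template but make the key computation explicit. The crucial observation is that the kernel product with a rank-one kernel collapses onto the action $\cdot$: for $h \in \sS_2$ one has
\begin{equation*}
h \circ (\psi \ox \phi) = (h \cdot \psi) \ox \phi,
\end{equation*}
which is immediate from the definitions of $\circ$ and of $f \cdot \phi$. Pairing the left-hand side of (ii) against $h \in \sS_2$ and using $\widetilde{\phi \ox \psi} = \psi \ox \phi$ gives $\duo{T \circ (\phi \ox \psi)}{h} = \duo{T}{h \circ (\psi \ox \phi)} = \duo{T}{(h \cdot \psi) \ox \phi}$. Pairing the right-hand side against $h$ and unwinding the definitions of the distributional tensor product and of $T \cdot \phi$ yields the same expression, the normalization constants matching because they are fixed consistently in \textbf{I}. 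Equality for all $h$ then gives (ii); of course the weak-$*$ density argument of the previous paragraph applies here as well.

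The main obstacle is bookkeeping rather than conceptual. For (i) the delicate point is that $W$ is extended to $\sS'_2$ through the \emph{auxiliary} operator $\Wbar^{-1}$ rather than through $W$ itself, and $\circ$ through the reflection $f \mapsto \tilde f$; so the ``unwind by hand'' route requires one first to verify an operator identity of the form $\Wbar(f \x k) = \Wbar k \circ \widetilde{Wf}$ on $\sS_2$ before transposing, keeping careful track of $R$, $\Phi$ and the various factors of $\sqrt{2\pi}$. The density argument sidesteps this entirely by invoking \eqref{eq:inter-twine} directly, and is the route I would take.
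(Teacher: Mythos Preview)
Your proposal is correct. For part~(ii) you follow essentially the paper's own proof: pair both sides against an arbitrary $h\in\sS_2$, use $\wt{\phi\ox\psi}=\psi\ox\phi$ and the collapse $h\circ(\psi\ox\phi)=(h\cdot\psi)\ox\phi$, and then unwind the definition of $T\cdot\phi$.

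For part~(i) you take a genuinely different route. The paper performs exactly the ``unwind by hand'' calculation that you flag as delicate: it first records the identity $W\Wbar^{-1}h=R\Phi^{-2}Rh=\tilde h$ (equivalently $\Wbar T=\wt{WT}$), and then chains the transposition rules for $W$, for~$\x$ and for~$\circ$ to show directly that $\duo{W(T\x f)}{h}=\duo{WT\circ Wf}{h}$ for every $h\in\sS_2$. Your density argument is shorter and sidesteps all of this bookkeeping, since both $T\mapsto W(T\x f)$ and $T\mapsto WT\circ Wf$ are transposes of continuous maps on~$\sS_2$, hence weak-$*$ continuous, and they agree on the weak-$*$ dense subspace $\sS_2$ by~\eqref{eq:inter-twine}. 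The trade-off is that the paper's explicit computation produces the auxiliary identity $W\Wbar^{-1}h=\tilde h$ as a byproduct, and this identity is reused shortly afterward in the proof of Theorem~\ref{th:Wigner-homom}.
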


\begin{proof}
Since $W\Wbar^{-1}h = R\Phi^{-2}Rh = \tilde h$, we get
\begin{align*}
\duo{W(T \x f)}{h} 
&= \duo{T}{f \x \Wbar^{-1}h} = \duo{\Wbar T}{W(f \x \Wbar^{-1}h)}
\\
&= \duo{\Wbar T}{Wf \circ \tilde h} 
= \duo{\wt{WT}}{Wf \circ \tilde h}
\\
&= \duo{WT}{h \circ \wt{Wf}} = \duo{WT \circ Wf}{h}
\end{align*}
for $h \in \sS_2$. Also,
\begin{align*}
\duo{T \circ (\phi\ox\psi)}{h}
&= \duo{T}{h \circ (\psi\ox\phi)} = \duo{T}{(h\cdot\psi) \ox \phi}
\\
&= \frac{1}{\sqrt{2}} \duo{T\cdot\phi}{h\cdot\psi} 
= \duo{(T\cdot\phi) \ox \psi}{h}.
\tag*{\qed}
\end{align*}
\hideqed
\end{proof}

Writing $(ZT)(\phi) := T \cdot \phi$, the kernel theorem for tempered
distributions~\cite{Treves67} states that $Z$ is an isomorphism from
$\sS'_2$ onto $\sL(\sS_1,\sS'_1)$. We now have the following theorem.

\begin{thm} % 1
\label{th:Wigner-homom}
$ZW(\sM_L) = \sL(\sS_1)$; moreover, $\sM_L$ is an algebra under the
twisted product, and $ZW \: \sM_L\to \sL(\sS_1)$ is an algebra
isomorphism.
\end{thm}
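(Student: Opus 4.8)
The plan is to prove the set equality $ZW(\sM_L)=\sL(\sS_1)$ by establishing two inclusions, and then to upgrade the resulting bijection to an algebra isomorphism by showing that $ZW$ carries the twisted product to operator composition. Recall from \textbf{I} that $\sM_L$ is the algebra of left twisted multipliers, $\set{T\in\sS'_2 : T\x f\in\sS_2 \text{ for all } f\in\sS_2}$. I shall use freely that $Z\:\sS'_2\to\sL(\sS_1,\sS'_1)$ and $W\:\sS'_2\to\sS'_2$ are isomorphisms, together with both parts of Lemma~\ref{lm:Wigner-homom}.

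For the inclusion $ZW(\sM_L)\subseteq\sL(\sS_1)$, I start from $T\in\sM_L$. By Lemma~\ref{lm:Wigner-homom}(i) we have $W(T\x f)=WT\circ Wf$, and since $W$ maps $\sS_2$ onto $\sS_2$, the hypothesis $T\x f\in\sS_2$ for all $f$ is equivalent to $WT\circ g\in\sS_2$ for all $g\in\sS_2$. Specializing $g=\phi\ox\psi$ and invoking Lemma~\ref{lm:Wigner-homom}(ii), I obtain $(ZWT)(\phi)\ox\psi=WT\circ(\phi\ox\psi)\in\sS_2$ for all $\phi,\psi\in\sS_1$. Fixing $\psi\neq 0$ forces the distribution $(ZWT)(\phi)\in\sS'_1$ to be a rapidly decreasing smooth function, i.e.\ $(ZWT)(\phi)\in\sS_1$; so $ZWT$ maps $\sS_1$ into $\sS_1$. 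Continuity of $ZWT$ as a map $\sS_1\to\sS_1$ then follows from the closed graph theorem: it is continuous into $\sS'_1$, the injection $\sS_1\hookrightarrow\sS'_1$ is continuous, and $\sS_1$ is Fréchet, so the graph is closed. Hence $ZWT\in\sL(\sS_1)$.

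For the reverse inclusion $\sL(\sS_1)\subseteq ZW(\sM_L)$, I take $A\in\sL(\sS_1)$, regard it as an element of $\sL(\sS_1,\sS'_1)$, and use the kernel theorem to write $A=ZS$ for a unique $S\in\sS'_2$; then $T:=W^{-1}S$ satisfies $ZWT=A$. It remains to check $T\in\sM_L$, i.e.\ $T\x f\in\sS_2$ for all $f\in\sS_2$, which by Lemma~\ref{lm:Wigner-homom}(i) amounts to $S\circ g\in\sS_2$ for all $g\in\sS_2$. Here I use that kernel composition corresponds to operator composition under $Z$, namely $Z(S\circ g)=ZS\circ Zg=A\circ Zg$ (a direct computation extending the case $f,g\in\sS_2$). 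Since $g\in\sS_2$, the operator $Zg$ carries $\sS'_1$ continuously into $\sS_1$, so $A\circ Zg$ carries $\sS'_1$ continuously into $\sS_1$ and is therefore a smoothing operator, whose kernel $S\circ g$ lies in $\sS_2$. Thus $T\in\sM_L$ and $A\in ZW(\sM_L)$. As $ZW$ is injective on all of $\sS'_2$, this makes $ZW\:\sM_L\to\sL(\sS_1)$ a bijection.

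Finally, I upgrade the bijection to an algebra isomorphism by showing $ZW$ intertwines the two products. For $T_1,T_2\in\sM_L$ the product $T_1\x T_2$ is governed by its left action $(T_1\x T_2)\x f=T_1\x(T_2\x f)$ on $\sS_2$, which lands in $\sS_2$ because $T_2\x f\in\sS_2$ and $T_1\in\sM_L$; this also yields $T_1\x T_2\in\sM_L$ once the product is known to define an element of $\sS'_2$. Extending Lemma~\ref{lm:Wigner-homom}(i) and the identity $Z(S\circ g)=ZS\circ Zg$ from Schwartz functions to multipliers gives $W(T_1\x T_2)=WT_1\circ WT_2$ and, applying $Z$, the intertwining $ZW(T_1\x T_2)=(ZWT_1)\circ(ZWT_2)$, the right-hand side being composition in $\sL(\sS_1)$. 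Associativity and bilinearity of $\x$ on $\sM_L$ then follow from those of operator composition, so $\sM_L$ is an algebra and $ZW$ is an algebra isomorphism onto $\sL(\sS_1)$. I expect the main obstacle to lie in the analytic heart of the converse inclusion --- identifying operators with $\sS_2$ kernel as exactly the continuous maps $\sS'_1\to\sS_1$, which rests on nuclearity of $\sS_1$ --- together with making the distributional twisted product of two multipliers precise and verifying its intertwining with composition.
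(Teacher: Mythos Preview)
Your argument is correct, and close in spirit to the paper's, but the two proofs diverge in a couple of places worth noting.

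For the set equality, the paper does not split into two inclusions; it runs a single chain of equivalences. The key move is that, since $f\mapsto T\x f$ is continuous $\sS_2\to\sS'_2$ (from~\textbf{I}) and $W^{-1}(\sS_1\ox\sS_1)$ is dense in $\sS_2$, the condition ``$T\x f\in\sS_2$ for all $f\in\sS_2$'' is \emph{equivalent} to ``$T\x W^{-1}(\phi\ox\psi)\in\sS_2$ for all $\phi,\psi$''. This collapses both inclusions into one step, and avoids invoking the identification $\sL(\sS'_1,\sS_1)\simeq\sS_2$ that you use for the reverse direction. Your smoothing-operator argument is perfectly valid (and makes the role of nuclearity explicit), but it is a heavier external input than the density-plus-continuity shortcut the paper takes. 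Your use of the closed graph theorem for continuity of $ZWT\:\sS_1\to\sS_1$ is a clean addition; the paper gets this implicitly via the continuity of the reduction map $\chi\ox\psi\mapsto\duo{R}{\psi}\chi$ on $\sS_2$.

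For the homomorphism property, the paper does not argue by ``extending'' Lemma~\ref{lm:Wigner-homom}(i) to two multipliers; it performs an explicit duality calculation. First it shows $ZW(T\x f)=ZW(T)\,ZW(f)$ for $T\in\sS'_2$, $f\in\sS_2$ (via Lemma~\ref{lm:Wigner-homom} on rank-one tensors and density). Then for $S\in\sM_L$ and arbitrary $T\in\sS'_2$ it computes $\duo{ZW(T\x S)(\phi)}{\psi}$ directly through the chain of pairings, using $\Wbar^{-1}(\psi\ox\phi)=W^{-1}(\phi\ox\psi)$, to obtain $ZW(T\x S)=ZWT\circ ZWS$. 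Closure of $\sM_L$ under $\x$ then drops out by taking $T=R\in\sM_L$ and using injectivity of $ZW$. This is exactly the ``analytic heart'' you flagged as an expected obstacle; the paper resolves it by a concrete computation rather than an abstract extension argument, so you should replace your sketch there with that calculation (or an equivalent one) to make the proof complete.
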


\begin{proof} 
$T \in \sM_L$ iff $T \x f \in \sS_2$ for all $f \in \sS_2$, iff
$T \x f \in \sS_2$ for all $f$ of the form $W^{-1}(\phi \ox \psi)$ 
with $\phi,\psi \in \sS_1$, since $\sS_1 \ox \sS_1$ is dense in
$\sS_2$ and $f \mapsto T \x f$ is continuous (see~\textbf{I}). Thus
$T \in \sM_L$ iff 
$$
W(T \x W^{-1}(\phi \ox \psi)) = WT \circ (\phi \ox \psi) 
= (WT \cdot \phi) \ox \psi = ZWT(\phi) \ox \psi \in \sS_2
$$
for all $\phi,\psi \in \sS_1$, iff $ZWT(\phi) \in \sS_1$ for all 
$\phi \in \sS_1$. This last statement holds since the reduction map 
$\chi \ox \psi \mapsto \duo{R}{\psi} \chi$, for any $R \in \sS'_1$, 
extends by linearity and continuity to the completed projective 
tensor product $\sS_1 \hatox \sS_1 \simeq \sS_2$, and hence maps 
$\sS_2$ into $\sS_1$ continuously.

Taking $T$, $f$ as before, and $\chi \in \sS_1$, we have
\begin{align*}
ZW(T \x f)(\chi) 
&= Z(WT \circ Wf)(\chi) = Z(WT \circ (\phi \ox \psi))(\chi)
\\
&= Z((WT \cdot \phi) \ox \psi)(\chi)
= ((WT \cdot \phi) \ox \psi) \cdot \chi
\\
&= WT \cdot \phi \duo{\psi}{\chi} = WT \cdot (Z(\phi \ox \psi)(\chi))
\\
&= ZWT(ZWf(\chi))
\end{align*}
and [by density of $W(\sS_1 \ox \sS_1)$ in $\sS_2$], we get 
$ZW(T \x f) = ZW(T) ZW(f)$ for any $f \in \sS_2$, $T \in \sS'_2$. If
$S \in \sM_L$, we then have
\begin{align*}
\duo{ZW(T \x S)(\phi)}{\psi} 
&= \duo{W(T \x S) \cdot \phi}{\psi}
= \frac{1}{\sqrt{2}} \duo{W(T \x S)}{\psi \ox \phi}
\\
&= \frac{1}{\sqrt{2}} \duo{T}{S \x \Wbar^{-1}(\psi \ox \phi)}
= \frac{1}{\sqrt{2}} \duo{T}{S \x W^{-1}(\phi \ox \psi)} 
\\
&= \frac{1}{\sqrt{2}} \duo{\Wbar T}{WS \circ (\phi \ox \psi)}
= \frac{1}{\sqrt{2}} \duo{\Wbar T}{ZWS(\phi) \ox \psi}
\\
&= \frac{1}{\sqrt{2}} \duo{WT}{\psi \ox ZWS(\phi)}
= \duo{ZWT(ZWS(\phi))}{\psi}
\end{align*}
for any $\psi \in \sS_1$. In particular, if $R \in \sM_L$, we get 
$ZW(R \x S) = ZW(R)ZW(S) \in \sL(\sS_1)$ and since $ZW$ is one-to-one
$\: \sS'_2 \to \sL(\sS_1,\sS'_1)$, we conclude that
$R \x S \in \sM_L$, so that $\sM_L$ is in fact an algebra, and
$ZW \: \sM_L \to \sL(\sS_1)$ is a bijective homomorphism.
\end{proof}

\begin{remk} % 2
By analogous arguments, or more directly by noting that 
$(WT)^* = \Wbar(T^*)$, one can show that
$Z\Wbar \: \sM_R \to \sL(\sS_1)$ is an algebra isomorphism.
\end{remk}

Before proceeding, we observe that the Wigner transformation gives a
direct, constructive proof of the Stone--von~Neumann theorem. We
regard $(\sS_2,\x)$ as an operator algebra and look at its left
regular representation $\pi(f)g := f \x g$. If
$f_0(u) := 2e^{-u^2/2}$, define $\om \: \sS_2 \to \bC$ by
$\om(g) := \scal{f_0}{g \x f_0}$. This $\om$ is linear and continuous
on~$\sS_2$, and $\om(g^* \x g) = \|g \x f_0\| \geq 0$. Since the
Gaussian function $f_0$ has the property that
$f_0 \x g \x f_0 = \scal{f_0}{g} f_0$, we have 
$\om(g) := \scal{f_0}{g}$.

Using the positive functional $\om$, we can apply the
Gelfand--Na\u{\i}mark--Segal construction to~$\sS_2$. We observe that
$\sK := \set{g \in \sS_2 : g \x f_0 = g}$ and
$\sK_0 := \set{g \in \sS_2 : g \x f_0 = 0}$ are closed left ideals in
$\sS_2$, that if $\eta \: \sS_2 \to \sS_2/\sK_0$ is the canonical
projection, then $\sS_2/\sK_0$ becomes a prehilbert space with inner
product
$$
\scal{\eta(g)}{\eta(f)} := \om(g^*\x f) = \scal{g \x f_0}{f \x f_0},
$$
whose completion is denoted $\sH_\om$, and that
$\eta(g) \mapsto \eta(f \x g)$ extends to an operator
$\pi_\om(f) \in \sL(\sH_\om)$ with $\|\pi_\om(f)\| \leq \|f\|$.

It is easily verified that $\eta(f_{mn}) = 0$ in $\sH_\om$ if
$n \neq 0$, and that the set $\set{\eta(f_{m0}) : m \in \bN}$ is an
orthonormal basis for $\sH_\om$. Now
$\sum_{m,n=0}^\infty c_{mn}f_{mn}$ lies in~$\sK$ iff
$c \in \sss$ (see~\textbf{I}) and $c_{mn} = 0$ for $n \neq 0$.
Thus $\eta \: \sK \to \sH_\om$ is isometric if $\sK$ is given the norm
of $L^2(\bR^2)$, and so extends to a unitary map
$V \: \ovl{\sK} \to \sH_\om$, where $\ovl{\sK}$ is the $L^2$-closure
of~$\sK$.

If $g = \sum_{m,n=0}^\infty c_{mn} f_{mn} \in \sS_2$ and
$\pi_\om(g) = 0$, then for all~$n$ we have
$$
0 = \eta(g \x f_{n0}) = \sum_{m=0}^\infty c_{mn}\eta(f_{m0}),
$$
so that all $c_{mn} = 0$: hence $\pi_\om \: \sS_2 \to \sL(\sH_\om)$ is
a faithful representation of~$\sS_2$. Using Schur's lemma, we can
check that $\pi_\om$ is irreducible.

Define the projector $P \: L^2(\bR^2) \to L^2(\bR)$ by
$P(\phi \ox \psi) := \scal{h_0}{\psi} \phi$ and
$Q \: L^2(\bR) \to L^2(\bR) \ox \bC h_0$ by $Q(\phi) := \phi \ox h_0$.
Then $PWV^{-1} \: \sH_\om \to L^2(\bR)$ is unitary, with inverse
$VW^{-1}Q$, and $PWV^{-1}(\eta(f_{m0})) = h_m$. Then we may calculate
that
$$
PW(q \x f_{m0}) = \sqrt{m}\,h_{m-1} + \sqrt{m + 1}\,h_{m+1}
= \sqrt{2}\,q h_m,
$$
and similarly $PW(p \x f_{m0}) = -\sqrt{2}\,i\,dh_m/dq$, so that 
$$
PW(q \x f) = \sqrt{2}\,q PWf, \quad 
PW(p \x f) = -\sqrt{2}\,i\,\frac{d}{dq} PWf  \word{for} f \in \sK.
$$

Let us write $\pi_s(f) := PWV^{-1}\pi_\om(f)VW^{-1}Q$ for
$f \in \sS_2$; we may call $\pi_s$ the \textit{Schr\"odinger
representation} of~$\sS_2$ on $L^2(\bR)$. This brings us to the
Stone--von~Neumann theorem, which in the present context states that
any representation of the twisted product algebra $\sS_2$ is
equivalent to a multiple of $\pi_s$; we show this to be true for the
left regular representation $\pi$, as a simple consequence of the GNS
construction. The unitary equivalence which decomposes $\pi$ is just
the Wigner transformation.

\begin{thm} % 2
\label{th:infinite-multy}
$W\pi(f)W^{-1} = \pi_s(f) \ox \one$, for all $f \in \sS_2$.
\end{thm}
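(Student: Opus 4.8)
The plan is to reduce both operators to their action on the matrix basis $\set{h_m \ox h_n}$ of $L^2(\bR^2) \simeq L^2(\bR) \ox L^2(\bR)$ and to check agreement there. By the intertwining identity~\eqref{eq:inter-twine}, for $h = Wg$ one has $W\pi(f)W^{-1}h = W(f \x W^{-1}h) = Wf \circ h$, so the left-hand operator is nothing but left kernel-multiplication by $Wf$. Writing $f = \sum_{m,n} a_{mn} f_{mn}$, so that $Wf = \sum_{m,n} a_{mn}\, h_m \ox h_n$ by~\eqref{eq:bases-matching}, everything will rest on the matrix-unit rule $f_{mn} \x f_{kl} = \delta_{nk} f_{ml}$ of~\textbf{I}, equivalently $(h_m \ox h_n) \circ (h_k \ox h_l) = \delta_{nk}\, h_m \ox h_l$ through~\eqref{eq:inter-twine} and~\eqref{eq:bases-matching}.

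First I would evaluate the left-hand side on a basis kernel:
\[
Wf \circ (h_k \ox h_l) = \sum_m a_{mk}\, h_m \ox h_l = \Bigl( \sum_m a_{mk} h_m \Bigr) \ox h_l .
\]
The decisive point is that the second tensor leg $h_l$ is left untouched; hence $W\pi(f)W^{-1}$ has the form $A \ox \one$, where $A \in \sL(L^2(\bR))$ is the operator determined by $A h_k := \sum_m a_{mk} h_m$.

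Next I would identify $A$ with $\pi_s(f)$ by unwinding the definition $\pi_s(f) = PWV^{-1}\pi_\om(f)VW^{-1}Q$. Since $Q h_l = h_l \ox h_0 = W f_{l0}$, we get $VW^{-1}Q h_l = \eta(f_{l0})$; then $\pi_\om(f)\eta(f_{l0}) = \eta(f \x f_{l0}) = \sum_m a_{ml}\,\eta(f_{m0})$ again by the matrix-unit rule; and applying $PWV^{-1}$, which sends $\eta(f_{m0}) \mapsto h_m$, gives $\pi_s(f) h_l = \sum_m a_{ml} h_m = A h_l$. Thus $\pi_s(f) = A$, and so $W\pi(f)W^{-1} = A \ox \one = \pi_s(f) \ox \one$ on the dense span of the $h_k \ox h_l$, hence everywhere.

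The hard part will be confirming the factorization $A \ox \one$, that is, that left kernel-multiplication by $Wf$ genuinely acts only on the first leg and as the identity on the second. This hinges on the matrix-unit rule holding with constant exactly~$1$, which depends on the mutually consistent normalizations of $W$, of the kernel product~$\circ$, and of the (non-unit) Hermite functions $h_m$ (here $\|h_m\|_2^2 = \sqrt{4\pi}$ cancels the factor $1/\sqrt{4\pi}$ in~$\circ$); verifying that these constants conspire correctly is the one genuinely delicate point. Once the same coefficient matrix $(a_{mn})$ is seen to drive both the kernel-action and $\pi_s$, the remainder is bookkeeping, modulo the routine remark that convergence of $\sum a_{mn} f_{mn}$ (with $(a_{mn}) \in \sss$) lets one pass from basis kernels to general $f \in \sS_2$.
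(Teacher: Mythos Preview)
Your argument is correct, but it takes a different route from the paper's. The paper works with \emph{arbitrary} $\phi,\psi \in \sS_1$ rather than with the Hermite basis: using Lemma~\ref{lm:Wigner-homom}(ii) it gets in one stroke
\[
W\pi(f)W^{-1}(\phi \ox \psi) = Wf \circ (\phi \ox \psi) = (Wf \cdot \phi) \ox \psi,
\]
so the tensor factorization $A \ox \one$ with $A\phi = Wf \cdot \phi$ is immediate, without any series expansion or matrix-unit computation. Likewise the paper identifies $\pi_s(f)\phi = Wf \cdot \phi$ for general~$\phi$ directly from the definitions of $P$, $V$, $Q$. Your approach instead expands $f$ in the twisted Hermite basis and checks everything on the $h_k \ox h_l$; this works, and has the virtue of making the operator $A$ explicit as a matrix, but it is more computational and in effect re-derives the special case of Lemma~\ref{lm:Wigner-homom}(ii) where $\phi,\psi$ are Hermite functions.

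One remark on your ``hard part'': the normalization worry is a non-issue. The identity $(h_m \ox h_n) \circ (h_k \ox h_l) = \dl_{nk}\, h_m \ox h_l$ with constant exactly~$1$ follows \emph{automatically} from applying $W$ to the matrix-unit rule $f_{mn} \x f_{kl} = \dl_{nk} f_{ml}$ via~\eqref{eq:inter-twine} and~\eqref{eq:bases-matching}; no separate check of the constants in $W$, $\circ$, or $\|h_m\|_2$ is needed. So the step you flagged as delicate is in fact the most robust one.
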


\begin{proof}
{}From Lemma~\ref{lm:Wigner-homom}, for $\phi,\psi \in \sS_1$ we
obtain
\begin{align*}
W\pi(f)W^{-1}(\phi \ox \psi)
&= W(f \x W^{-1}(\phi \ox \psi)) 
\\
&= Wf \circ (\phi \ox \psi)) = (Wf \cdot \phi) \ox \psi
\end{align*}
and also
\begin{align*}
\pi_s(f)\phi &= PWV^{-1} \pi_\om(f) VW^{-1}(\phi \ox h_0)
\\
&= PWV^{-1}V(f \x W^{-1}(\phi \ox h_0)) = Wf \cdot \phi
\end{align*}
for all $\phi,\psi \in \sS_1$. Since the functions $\phi \ox \psi$
generate a dense subspace of $L^2(\bR^2)$, we are done.
\end{proof}

\begin{remk} % 3
In the previous discussion, we may replace $L^2(\bR)$ by $\sS_1$,
$L^2(\bR^2)$ by $\sS_2$, and $\sS_2$ by $\sM_L$. Then $PWV^{-1}$ is a
Fréchet-space isomorphism from $\sS_2/\sK_0$ onto $\sS_1$, and so we
can extend the representations $\pi_\om$ and~$\pi_s$ to~$\sM_L$
(acting on $\sS_2/\sK_0$ and $\sS_1$, respectively). With $\pi$ now
denoting the left regular representation of $\sM_L$ on its left ideal
$\sS_2$, Theorem~\ref{th:infinite-multy} remains valid, with the added
advantage that we can write $\pi_s(q)\phi = \sqrt{2}\,q\phi$,
$\pi_s(p)\phi = -\sqrt{2}\,i\,d\phi/dq$ for $\phi \in \sS_1$, thus
displaying the Schr\"odinger representation $\pi_s$ in its familiar
form, modulo a normalization factor.
\end{remk}

\begin{remk} % 4
{}From the proof, we see that
\begin{equation}
(\pi_s(f)\phi)(x) = (Wf \cdot \phi)(x)
= \frac{1}{2\pi\sqrt{2}} \iint_{\bR^2} f\biggl(
\frac{x+y}{\sqrt{2}},z \biggr) e^{i(x-y)z/\sqrt{2}} \phi(y) \,dz\,dy.
\label{eq:Weyl-Psido} % (5)
\end{equation}
Thus the operator $\pi_s(f)$ is the pseudodifferential operator (in
the sense of H\"ormander~\cite{Hormander79}) associated to the
``symbol''~$f$. This is precisely Weyl's quantization rule in more
fashionable language. If $f = p^2/2m + V(q)$, we get easily from
\eqref{eq:Weyl-Psido}, at least formally, the ``Schr\"odinger
operator'': $\pi_s(f) = -(2/m)\,\Dl + V(q)$.
\end{remk}

\section{Operator topologies on $\sM$} % 3
\label{sec:operator-topologies}

Now $\sL(\sS_1)$ carries a natural topology, that of uniform
convergence on bounded subsets of $\sS_1$ [under which it is a
complete, nuclear, reflexive locally convex
space~\cite{Grothendieck55,Vogt84}, usually denoted by
$\sL_b(\sS_1)$]. This is the standard example of a locally convex
algebra which is neither Fréchet nor DF. Let $\sT_1$ be the unique
locally convex topology on $\sM_L$ so that
$ZW \: (\sM_L,\sT_1) \to \sL_b(\sS_1)$ is a homeomorphism.

A second method of topologizing $\sM_L$ is as follows. If 
$(\eps_u \tau_u f)(v) := e^{iu'Jv}f(v-u)$, we have shown in~\textbf{I}
that $\eps_u \tau_u(T \x f) = T \x (\eps_u \tau_u f)$ for all
$T \in \sS'_2$, $f \in \sS_2$, $u \in \bR^2$; and moreover, that if
$L \: \sS_2 \to \sE(\bR^2)$ is a continuous linear map commuting with
every $\eps_u \tau_u$, then $L(f) = T \x f$ for all $f \in \sS_2$,
for some $T \in \sS'_2$. Thus we find, writing $L_T(f) := T \x f$,
that
$$
\set{L_S : S \in \sM_L} = \set{L \in \sL_b(\sS_2)
: L\,\eps_u \tau_u = \eps_u \tau_u \,L \text{ for all } u \in \bR^2}.
$$
Let $\sT_2$ denote the topology on $\sM_L$ so that $S \mapsto L_S$ is a 
homeomorphism of $(\sM_L,\sT_2)$ onto this closed subspace of 
$\sL_b(\sS_2)$, where the subscript $b$ denotes the topology of 
uniform convergence on bounded subsets of~$\sS_2$. Then, we have the 
following theorem.

\begin{thm} % 3
\label{th:same-topologies}
The topologies $\sT_1$ and $\sT_2$ on $\sM_L$ coincide.
\end{thm}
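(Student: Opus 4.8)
The plan is to push everything through $W$ and $ZW$, reducing the claim to the statement that tensoring an operator with the identity is a topological embedding. First I would note that, since $W$ is a topological isomorphism of $\sS_2$, conjugation $L \mapsto WLW^{-1}$ is a homeomorphism of $\sL_b(\sS_2)$ onto itself; and by Lemma~\ref{lm:Wigner-homom}(i), $WL_SW^{-1}(g) = W(S \x W^{-1}g) = WS \circ g$. Hence $\sT_2$ is equally the topology making $S \mapsto M_{WS}$, with $M_A(g) := A \circ g$, a homeomorphism onto its (closed) image in $\sL_b(\sS_2)$. Writing $B := ZWS$ --- which by Theorem~\ref{th:Wigner-homom} runs over all of $\sL(\sS_1)$ as $S$ runs over $\sM_L$ --- the topology $\sT_1$ is that of $\sL_b(\sS_1)$ carried to $\sM_L$ through $B$. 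Finally, under $\sS_2 \simeq \sS_1 \hatox \sS_1$, Lemma~\ref{lm:Wigner-homom}(ii) gives $M_{WS}(\phi \ox \psi) = (WS \cdot \phi) \ox \psi = (B\phi) \ox \psi$, so by density of $\sS_1 \ox \sS_1$ and continuity, $M_{WS} = B \hatox \one$. The theorem thus becomes the assertion that $j \: \sL_b(\sS_1) \to \sL_b(\sS_2)$, $B \mapsto B \hatox \one$, is a homeomorphism onto its image.

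One inclusion is easy. I would fix $\psi_0 \in \sS_1 \setminus \{0\}$; the map $\chi \mapsto \chi \ox \psi_0$ is a topological embedding $\sS_1 \hookrightarrow \sS_2$ (the reduction map used in Theorem~\ref{th:Wigner-homom} furnishes a continuous left inverse), so any continuous seminorm $p$ on $\sS_1$ is dominated by some continuous seminorm $r$ on $\sS_2$: $p(\chi) \leq r(\chi \ox \psi_0)$. Since $(B \hatox \one)(\phi \ox \psi_0) = (B\phi) \ox \psi_0$, for bounded $N \subset \sS_1$ we get $\sup_{\phi \in N} p(B\phi) \leq \sup_{g \in N \ox \psi_0} r((B \hatox \one)g)$, with $N \ox \psi_0$ bounded in $\sS_2$. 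Thus each defining seminorm of $\sL_b(\sS_1)$ is dominated by a single pulled-back seminorm of $\sL_b(\sS_2)$, giving $\sT_1 \subseteq \sT_2$.

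The reverse inclusion --- the continuity of $j$ itself --- is the crux, and here the nuclearity of $\sS_1$ is indispensable, since one must bound $(B \hatox \one)g$ for \emph{non-factorizable} $g \in \sS_2$ while only controlling $B$ on $\sS_1$. I would work in the matrix-basis model of~\textbf{I}, where $\sS_1 \simeq \sss$-type spaces appear: $\sS_1 \simeq s$ and $\sS_2 \simeq \sss$ are the rapidly decreasing single- and double-indexed sequences, $B = (b_{kl})$, and $((B \hatox \one)g)_{kn} = \sum_l b_{kl} g_{ln}$ acts on the first index only. Given a bounded $M \subset \sss$ and an order $K$, the rapid decrease of the $g \in M$ makes the rescaled columns $\{(1+n)^K (g_{ln})_l : g \in M,\ n \in \bN\}$ a bounded set $A \subset s$; and since the $n$th column of $(1+n)^K (B \hatox \one)g$ is just $B$ applied to that rescaled column, one reads off $\sup_{g \in M} \|(B \hatox \one)g\|_K \leq \sup_{a \in A} \|Ba\|_K$, a single $\sL_b(\sS_1)$-seminorm. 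This yields $\sT_2 \subseteq \sT_1$, and combined with the previous paragraph, $\sT_1 = \sT_2$. The same estimate can be phrased abstractly through a description of the bounded subsets of the nuclear projective tensor product $\sS_1 \hatox \sS_1$ by null sequences $\phi_n, \psi_n \to 0$, together with the bound $r \leq p \ox_\pi q$ on continuous seminorms. I expect this factorization of bounded sets --- available precisely because $\sS_1$ is nuclear --- to be the main obstacle: the product-test-function trick of the easy direction is powerless for entangled $g$, and it is nuclearity that lets one reduce a general $g$ to controlled one-variable pieces.
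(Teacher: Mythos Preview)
Your proof is correct and follows the same overall architecture as the paper's: transfer via $W$ so that $L_S$ becomes $M_{WS}$, identify $M_{WS}$ with $(ZWS)\hatox\one$ via Lemma~\ref{lm:Wigner-homom}(ii), and then show that $B \mapsto B\hatox\one$ is a topological embedding of $\sL_b(\sS_1)$ in $\sL_b(\sS_2)$. Your ``easy'' direction (fix $\psi_0$, use the reduction map) is exactly the paper's second neighbourhood inclusion, just phrased with seminorms instead of $0$-neighbourhoods.

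The only genuine difference is in the ``hard'' direction $\sT_2\subseteq\sT_1$. The paper invokes a structural fact from Schaefer: since $\sS_2=\sS_1\hatox\sS_1$ with $\sS_1$ nuclear, a basic $\sT'_2$-neighbourhood $(B;U)_2$ can be taken with $B=A_1\ox A_2$ a product of bounded sets and $U=\Ga(V_1\ox V_2)$; the inclusion $(A_1;V_1)_1\subset(B;U)_2$ then follows in one line from $T\circ(\phi\ox\psi)=(T\cdot\phi)\ox\psi$. You instead do this by hand in the sequence model $\sS_1\simeq s$, $\sS_2\simeq\sss$: for a bounded $M\subset\sss$ the rescaled columns $\{(1+n)^K(g_{ln})_l\}$ form a single bounded set $A\subset s$, giving $\sup_{g\in M}\|(B\hatox\one)g\|_K\leq\sup_{a\in A}\|Ba\|_K$. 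This is precisely the concrete content of the nuclear-tensor-product fact the paper cites---your argument is more self-contained, while the paper's is shorter but rests on an external reference. Either way the key point, as you correctly diagnose, is that nuclearity lets one disentangle a general bounded set in $\sS_2$ into one-variable pieces.
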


We next define the topology of $\sM_R$ so that $S \mapsto S^*$ is a
topological isomorphism of $\sM_L$ onto~$\sM_R$. Finally, we give
$\sM$ the natural topology of the intersection $\sM_L \cap \sM_R$,
that is, the weakest locally convex topology so that both inclusions
$\sM \subset \sM_L$, $\sM \subset \sM_R$ are continuous. From the
definition of $\sT_1$, it is already known that $\sM$ is a complete,
nuclear, semireflexive locally convex $*$-algebra with a separately
continuous multiplication and a continuous involution; on the other
hand, via $\sT_2$, $\sM$ may be regarded as an operator $*$-algebra on
$\sS_2$. This is particularly useful with a view to solving
Schr\"odinger equations of the form
$$
2i\,\pd{U}{t} = H \x U(t); \quad U(0) = \one,
$$
where $H \in \sM$, $U(t) \in \sM$ for $t \in \bR$, using operator
semigroup theory. Note that the semigroup property gives 
$U(s + t) = U(s)U(t)$. In this formula $H$ denotes a time-independent
Hamiltonian and $U(t)$ is the ``twisted exponential''
\cite{BayenM82,Huguenin78} or ``evolution function'' \cite{HAtom}
associated with this Hamiltonian, which contains the dynamical
information for the system analogously with the Green's function in
the conventional formulation.

\begin{proof}[Proof of Theorem~\ref{th:same-topologies}]
For $i = 1,2$, let $\sT'_i$ be the locally convex topology on
$W(\sM_L)$ so that $W \: (\sM_L,\sT_i) \to (W(\sM_L),\sT'_i)$ is a
homeomorphism. It suffices to show that $\sT'_1 = \sT'_2$.

A basic neighbourhood of~$0$ for $\sT'_1$ is of the form
$$
(A;V)_1 := 
\set{T \in W(\sM_L) : T\cdot\phi \in V \text{ for all } \phi \in A}
$$
where $A$ is a bounded subset of $\sS_1$ and $V$ is a
zero-neighbourhood in $\sS_1$. A basic neighbourhood of~$0$ for
$\sT'_2$ is of the form
$$
(B;U)_2 :=
\set{T \in W(\sM_L) : T \circ f \in U \text{ for all } f \in B}
$$
where $B$ is a bounded subset of $\sS_2$ and $U$ is a 
zero-neighbourhood in~$\sS_2$. Since $\sS_2 = \sS_1 \hatox \sS_1$
\cite{Treves67}, it suffices~\cite{Schaefer66} to consider $B$ of the
form $B = A_1 \ox A_2$, where $A_1$, $A_2$ are bounded subsets
of~$\sS_1$, and $U$ of the form $U = \Ga(V_1 \ox V_2)$, the closed
absolutely convex hull of $V_1 \ox V_2$, where $V_1$, $V_2$ are
zero-neighbourhoods in $\sS_1$. Since $A_2$ is bounded, we can find
$r > 0$ so that $A_2 \subset rV_2$; since
$r^{-1}V_1 \ox rV_2 = V_1 \ox V_2$, we can also assume that
$A_2 \subset V_2$.

For a given such $(B;U)_2$, let $T \in (A_1;V_1)_1$, 
$\phi \in A_1$, $\psi \in A_2$; then from Lemma~\ref{lm:Wigner-homom}
we obtain
$$
T \circ (\phi\ox\psi) = (T\cdot\phi) \ox \psi \in V_1 \ox A_2
\subset V_1 \ox V_2 \subset U
$$
so that $(B;U)_2$ contains $(A_1;V_1)_1$.

On the other hand, let $(A;V)_1$ be given, with $V$ absolutely convex.
Choose $\psi \in \sS_1$, $R \in \sS'_1$ such that $\duo{R}{\psi} = 1$,
and set $V_2 := \set{\phi \in \sS_1 : |\duo{R}{\phi}| \leq 1}$. If
$T \in (A \ox \{\psi\}; \Ga(V \ox V_2))_2$ and $\phi \in A$, then
$T \circ (\phi \ox \psi) = (T \cdot \phi) \ox \psi$ lies in
$\Ga(V \ox V_2)$, and on applying the reduction map
$\chi \ox \om \mapsto \duo{R}{\om} \chi$, we find that
$T \cdot \phi \in V$. Thus $(A;V)_1$ contains a set of the form
$(B;U)_2$.

We have shown that the zero-neighbourhood bases for $\sT'_1$, $\sT'_2$
are equivalent, so $\sT'_1 = \sT'_2$ and hence $\sT_1 = \sT_2$.
\end{proof}

\section{Filtrations of $\sS'_2$} % 4
\label{sec:many-filtrations}

In this section we introduce several filtrations of $\sS'_2$, in terms
of which $\sM$ may be characterized. We start with a rigged Hilbert
space structure which may be defined (in two-dimensional phase space)
by a two-parameter family of Hilbert spaces. (See~\textbf{I}, Sec.~5.)
In~\textbf{I} we have shown that the basis functions $f_{mn}$
of~\eqref{eq:matrix-basis} are orthonormal with respect to the measure
$(4\pi)^{-1} \,dq\,dp$, that $f_{mn} \x f_{kl} = \dl_{nk} f_{ml}$, and
that
\begin{equation}
H \x f_{mn} = (2m + 1) f_{mn},  \qquad  f_{mn} \x H = (2n + 1) f_{mn}.
\label{eq:eigen-basis} % (6)
\end{equation}

We introduced the Hilbert spaces $\sG_{s,t}$ (for $s,t \in \bR$)
in~\textbf{I} as the completions of $\sS_2$ with respect to the norm
$\|\cdot\|_{st}$, where
\begin{equation}
\biggl\| \sum_{m,n=0}^\infty c_{mn}f_{mn} \biggr\|_{st}^2 
:= \sum_{m,n=0}^\infty (2m + 1)^s (2n + 1)^t |c_{mn}|^2,
\label{eq:many-norms} % (7)
\end{equation}
and we observed that
\begin{equation}
\sS_2 = \bigcap_{s,t\in\bR} \sG_{s,t}, \qquad 
\sS'_2 = \bigcup_{s,t\in\bR} \sG_{s,t}
\label{eq:double-scale} % (8)
\end{equation}
topologically.

Recalling \eqref{eq:bases-matching} that $W(f_{mn}) = h_m \ox h_n$,
we get $ZW(f_{mn}) = \ketbra{h_m}{h_n}$, which shows that 
$ZW(\sG_{0,0}) = \HS(L^2(\bR))$, the Hilbert--Schmidt operators on
$L^2(\bR)$, on account of~\eqref{eq:many-norms}. Next we note that
\begin{align*}
ZWH(h_m) \ox h_n &= (WH\cdot h_m) \ox h_n = WH \circ (h_m \ox h_n)
\\
&= W(H \x f_{mn}) = (2m + 1) W(f_{mn}) = (2m + 1) h_m \ox h_n
\end{align*}
for all $m,n = 0,1,2,\dots$; thus $ZWH(h_m) = (2m + 1) h_m$. This
shows that $(ZWH)^{-1}$ exists and lies in $\HS(L^2(\bR))$.

\begin{thm} % 4
\label{th:odd-corners}
$\sM_L = \bigcap_{s\in\bR} \bigcup_{t\in\bR} \sG_{s,t}; \quad
\sM_R = \bigcap_{t\in\bR} \bigcup_{s\in\bR} \sG_{s,t}$.
\end{thm}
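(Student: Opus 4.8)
The plan is to work entirely in the ``matrix picture'' supplied by the basis $\set{f_{mn}}$. Writing $T = \sum_{m,n} c_{mn} f_{mn} \in \sS'_2$ and $f = \sum_{k,l} d_{kl} f_{kl} \in \sS_2$, the product rule $f_{mn} \x f_{kl} = \dl_{nk} f_{ml}$ shows that $T \x f$ has coefficients $(T \x f)_{ml} = \sum_n c_{mn} d_{nl}$; that is, the twisted product is ordinary matrix multiplication, while \eqref{eq:many-norms} reads $\|T\|_{st}^2 = \sum_{m,n}(2m+1)^s(2n+1)^t|c_{mn}|^2$. By \eqref{eq:double-scale}, $f \in \sS_2$ iff $\|f\|_{st} < \infty$ for all $s,t$, whereas $T \in \sS'_2$ iff $\|T\|_{st} < \infty$ for some $s,t$. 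Recall from the proof of Theorem~\ref{th:Wigner-homom} that $T \in \sM_L$ iff $T \x f \in \sS_2$ for every $f \in \sS_2$, while membership in $\bigcap_s \bigcup_t \sG_{s,t}$ means that for each $s$ there is some $t = t(s)$ with $\|T\|_{s,t} < \infty$. I would prove the two inclusions separately and then deduce the $\sM_R$ statement by symmetry.

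For the inclusion $\bigcap_s\bigcup_t \sG_{s,t} \subseteq \sM_L$, I would fix $T$ in the left-hand set together with arbitrary target exponents $a,b$ and bound $\|T\x f\|_{a,b}$. Choosing $t = t(a)$ from the hypothesis and splitting the inner sum $\sum_n c_{mn}d_{nl}$ by Cauchy--Schwarz with the conjugate weights $(2n+1)^{t/2}$ and $(2n+1)^{-t/2}$, the resulting double sum over $m,l$ factors and yields the clean estimate
\[
\|T \x f\|_{a,b} \leq \|T\|_{a,t}\,\|f\|_{-t,b}.
\]
The first factor is finite by the choice of $t$, and the second is finite because $f \in \sS_2$; hence $T \x f \in \sS_2$ and $T \in \sM_L$. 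This direction is routine.

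The reverse inclusion $\sM_L \subseteq \bigcap_s\bigcup_t \sG_{s,t}$ is where the real work lies. The plan is to exploit that, for $T \in \sM_L$, left multiplication $L_T \: f \mapsto T \x f$ is a \emph{continuous} map $\sS_2 \to \sS_2$: it maps $\sS_2$ into $\sS_2$ by definition, and it is continuous into $\sS'_2$ (see~\textbf{I}), so the closed graph theorem for Fréchet spaces promotes it to a continuous self-map of $\sS_2$. Continuity then furnishes, for the target seminorm $\|\cdot\|_{s,0}$, exponents $s',t'$ and a constant $C$ with $\|T \x f\|_{s,0} \leq C\|f\|_{s',t'}$ for all $f$. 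Testing against $f = f_{n0}$, for which $T \x f_{n0} = \sum_m c_{mn} f_{m0}$ and $\|f_{n0}\|_{s',t'}^2 = (2n+1)^{s'}$, produces the key pointwise bound $\sum_m (2m+1)^s |c_{mn}|^2 \leq C^2 (2n+1)^{s'}$ for every $n$. Summing over $n$ against $(2n+1)^t$ with any $t < -s'-1$ then gives $\|T\|_{s,t} < \infty$, so $T \in \bigcup_t \sG_{s,t}$; as $s$ was arbitrary, $T$ lies in the intersection. I expect the main obstacle to be exactly the passage from ``$L_T$ maps $\sS_2$ into $\sS_2$'' to its continuity, together with the judicious choice of the test functions $f_{n0}$ (rather than general $f_{nl}$), so that the extra $(2l+1)$ weight drops out and the bound is uniform in the suppressed index.

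Finally, for $\sM_R$ I would invoke the involution. From $f_{mn}^* = f_{nm}$ one reads off $(T^*)_{mn} = \ovl{c_{nm}}$, whence $\|T^*\|_{st} = \|T\|_{ts}$, so that $T^* \in \sG_{s,t}$ iff $T \in \sG_{t,s}$; moreover $S \in \sM_R$ iff $S^* \in \sM_L$. Combining these facts with the result just established for $\sM_L$ and relabelling $s \leftrightarrow t$ converts $\bigcap_s\bigcup_t$ into $\bigcap_t\bigcup_s$, giving $\sM_R = \bigcap_t \bigcup_s \sG_{s,t}$ with no further computation.
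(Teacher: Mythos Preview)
Your argument is correct, but it proceeds quite differently from the paper's. The paper transports everything through the isomorphism $ZW\:\sM_L \to \sL(\sS_1)$ of Theorem~\ref{th:Wigner-homom}: it writes $\sS_1 = \bigcap_k \sH_k$ for the Hilbert scale $\sH_k := \sD((ZWH)^k)$, characterizes $ZWT \in \sL(\sS_1)$ as the condition that for every $m$ there is an $n$ with $(ZWH)^m(ZWT)(ZWH)^{-n+1}$ bounded on $L^2(\bR)$, and then uses the key observation that $(ZWH)^{-1}$ is Hilbert--Schmidt to upgrade ``bounded'' to ``Hilbert--Schmidt'', which pulls back to $T \in \sG_{2m,-2n}$. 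You instead stay in the matrix picture throughout: the inclusion $\bigcap_s\bigcup_t\sG_{s,t} \subset \sM_L$ is handled by the neat product inequality $\|T\x f\|_{a,b} \leq \|T\|_{a,t}\|f\|_{-t,b}$ (which the paper does not isolate, though it reappears implicitly in the proof of Theorem~\ref{th:idem-topologies}), and the harder inclusion comes from the closed graph theorem plus probing with the rank-one test vectors $f_{n0}$. Both routes ultimately rest on the same Fr\'echet-space continuity principle, but your version is more self-contained (it avoids Theorem~\ref{th:Wigner-homom} and the Hilbert--Schmidt trick entirely), while the paper's version makes transparent the operator-theoretic mechanism---the compact resolvent of the harmonic oscillator---that drives the result. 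Your treatment of $\sM_R$ via the involution is identical to the paper's.
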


\begin{proof}
We observe that $\sH_k := \sD((ZWH)^k)$ is a Hilbert space under the
norm $\|\cdot\|_k$, where
$$
\biggl\| \sum_{m=0}^\infty a_m h_m \biggr\|_k^2
:= \sum_{m=0}^\infty (2m + 1)^{2k} |a_m|^2
$$
and that $\sS_1 = \bigcap_{k=0}^\infty \sH_k$ topologically.

We now notice that $T \in \sM_L$ iff $ZWT \in \sL(\sS_1)$ iff for 
all $m \geq 0$, there exists $n \geq 1$ with 
$ZWT \in \sL(\sH_{n-1}, \sH_m)$ or equivalently:
\begin{gather*}
A := (ZWH)^m (ZWT) (ZWH)^{-n+1} \in \sL(L^2(\bR)),
\\
(ZWH)^m (ZWT) (ZWH)^{-n} = A(ZWH)^{-1} \in \HS(L^2(\bR)),
\\
H^{\x m} \x T \x H^{\x(-n)} \in L^2(\bR^2) = \sG_{0,0},  \qquad
T \in \sG_{2m,-2n},
\end{gather*}
iff
$$
T \in \bigcap_{m=0}^\infty \bigcup_{n=0}^\infty \sG_{2m,-2n}
= \bigcap_{s\in\bR} \bigcup_{t\in\bR} \sG_{s,t}.
$$

Note further that $S \in \sM_R$ iff $S^* \in \sM_L$ and
$S^* \in \sG_{t,s}$ iff $S \in \sG_{s,t}$, so that 
$\sM_R = \bigcap_{t\in\bR} \bigcup_{s\in\bR} \sG_{s,t}$ as claimed.
\end{proof}

\begin{remk} % 5
The observation that $A(ZWH)^{-1}$ is Hilbert--Schmidt for all~$m$ and
some~$n$ is due to J. Unterberger \cite{JUnterberger84} who gives it
in a slightly different form. Similar ideas lie behind the fundamental
approach to the generalized eigenvalue problem by van~Eijndhoven and
de~Graaf~\cite{EijndhovenDG85}.)
\end{remk}

We may visualize the conclusions of Theorem~\ref{th:odd-corners} by
means of the following diagram:
$$
\begin{tikzcd}
\sM_L \rar & \sG_{s,-\infty} \rar & \sG_{s-1,-\infty} \rar & \sS'_2 
\\
\sM_L' \uar & \sG_{s,t-1} \rar \uar
& \sG_{s-1,t-1} \rar \uar & \sG_{-\infty,t-1} \uar
\\
\sG_{+\infty,t} \rar \uar & \sG_{s,t}  \uar \rar
& \sG_{s-1,t} \uar \rar & \sG_{-\infty,t} \uar 
\\
\sS_2 \rar \uar & \sG_{s,+\infty} \rar \uar
& \sM_R' \rar & \sM_R \uar
\end{tikzcd}
$$

Here we set $\sG_{s,-\infty} := \bigcup_{t\in\bR} \sG_{s,t}$, 
$\sG_{-\infty,t} := \bigcup_{s\in\bR} \sG_{s,t}$; then 
$\sM_L = \bigcap_{s\in\bR} \sG_{s,-\infty}$, 
$\sM_R = \bigcap_{t\in\bR} \sG_{-\infty,t}$. This gives us our third 
method of topologizing $\sM_L$: let $\sG_{s,-\infty}$ have the
strongest locally convex topology such that all the inclusions
$\sG_{s,t} \subset \sG_{s,-\infty}$ are continuous. Let $\sT_3$ denote
the weakest locally convex topology on $\sM_L$ such that all the 
inclusions $\sM_L \subset \sG_{s,-\infty}$ are continuous. On the 
diagram, reflection in the principal diagonal [i.e., 
$(s,t) \mapsto (t,s)$] represents complex conjugation, and so the 
topology of $\sM_R$ is defined in the analogous way. Thus, every arrow 
in the diagram represents a continuous (and dense) inclusion.

[We include the left column and bottom row for completeness, although
they are proved in a separate article~\cite{Charon}. We write 
$\sG_{+\infty,t} := \bigcap_{s\in\bR} \sG_{s,t}$ (topologically) and
in~\cite{Charon} we have shown that the dual space
$\sM_L' = \bigcup_{t\in\bR} \sG_{+\infty,t}$ (topologically) and that
$\sM_L' \subset \sM_L$. Analogous results hold for the bottom row of the
diagram.]

Note that the characterization of $\sM$ given by
Theorem~\ref{th:odd-corners} enables us to check whether a
distribution given in the form
$$
T = \sum_{m,n=0}^\infty c_{mn} f_{mn} \in \sS'_2
$$
(whenever this series converges in $\sS'_2$) belongs to $\sM$ or not.
For example,
$$
\text{if}\quad T := \sum_{m,n=0}^\infty e^{-m} f_{mn},
\word{then} \|T\|_{st}^2
= \sum_{m=0}^\infty (2m+1)^s e^{-2m} \sum_{n=0}^\infty (2n+1)^t,
$$
which converges iff $t < -1$; thus $T \in \sM_L$ but $T \notin \sM_R$.
(Also, $T^* = \sum_{m,n=0}^\infty e^{-n} f_{mn}$ lies in $\sM_R$ but
not in $\sM_L$.) This shows that $\sM$, $\sM_L$, $\sM_R$ and~$\sS'_2$
are distinct.

We may also filtrate $\sS'_2$ by other types of Banach spaces, 
corresponding not to Hilbert--Schmidt operators but to trace-class or 
bounded operators on $L^2(\bR)$. Since $ZW$ is an isomorphism between 
$\sG_{0,0}$ and $\HS(L^2(\bR))$, we introduce
$$
\sI_{s,t} := \sG_{s,0} \x \sG_{0,t}
:= \set{f \x g : f \in \sG_{s,0},\,g \in \sG_{0,t}}.
$$
From \eqref{eq:eigen-basis} and \eqref{eq:many-norms} we find that
\begin{align*}
\sI_{s,t}
&= \set{H^{\x(-s/2)} \x f \x g \x H^{\x(-t/2)} : f,g \in \sG_{0,0}}
\\
&= \set{h \x k : h \in \sG_{s,q},\ k \in \sG_{-q,t}}
\quad\text{for any } q \in \bR.
\end{align*}
We have $h \in \sI_{0,0}$ iff $ZWh$ is a trace-class operator, and so
(by polar decomposition) we can find $u \in \sS'_2$ with
$u^* \x u = \one$ and $|h| \in \sI_{0,0}$ so that $h = u \x |h|$ and
$|h| = f^* \x f$ with $f \in \sG_{0,0}$. Writing
$\|h\|_{00,1} := \duo{\one}{|h|} = \|f\|_{00}^2$, we see that
$\sI_{0,0}$ is a Banach space and the inclusion
$\sI_{0,0} \subset \sG_{0,0}$ is continuous. Since,
by~\eqref{eq:many-norms},
$\|g\|_{st} = \|H^{\x(s/2)} \x g \x H^{\x(t/2)}\|_{00}$ for
$g \in \sG_{s,t}$, we may define
$$
\|f\|_{st,1} := \|H^{\x(s/2)} \x f \x H^{\x(t/2)}\|_{00,1}
$$
yielding the continuous inclusion $\sI_{s,t} \subset \sG_{s,t}$ for
all $s,t \in \bR$.

Next let $\sB_{s,t}$ be the dual space of $\sI_{-t,-s}$ under the 
duality $\scal{T}{f} := \half\duo{T^*}{f}$ for $T \in \sB_{s,t}$, 
$f \in \sI_{-t,-s}$. $\sB_{s,t}$ is naturally imbedded in $\sS'_2$. In
fact, we may identify $\sB_{s,t}$ with
$\{\, T \in \sS'_2 : \duo{T^*}{f}$ is finite for all 
$f \in \sI_{-t,-s} \,\}$. Note also~\cite{Kammerer86} that $\sB_{0,0}$
is the space of tempered distributions whose twisted product with any
function in $L^2(\bR^2)$ lies in $L^2(\bR^2)$. The norm
$$
\|T\|_\mathrm{op}
:= \sup\set{\|T \x f\|_{00}/\|f\|_{00} : f \in L^2(\bR^2),\ f \neq 0}
$$
on $\sB_{0,0}$ coincides with its norm as the
dual space of $\sI_{0,0}$ (so we have incidentally proved that the
dual of the space of trace-class operators on a separable Hilbert
space is the space of bounded operators).

Since $\sG_{s,t}$ and $\sG_{-t,-s}$ are dual Hilbert spaces under the
duality
$$
\biggl( \sum_{m,n=0}^\infty c_{mn}f_{mn} \biggm|
\sum_{k,l=0}^\infty d_{kl}f_{kl} \biggr)
:= \sum_{m,n=0}^\infty c_{mn}^* d_{mn}
$$
we obtain a chain of continuous inclusions
\begin{equation}
\sS_2 \subset \sI_{s,t} \subset \sG_{s,t} \subset \sB_{s,t}
\subset \sS'_2
\label{eq:short-chain} % (9)
\end{equation}
where we define the norm of $\sB_{s,t}$ as the dual norm to 
$\|\cdot\|_{-t,-s,1}$. Denoting this norm by $\|\cdot\|_{st,\infty}$,
we obtain
\begin{equation}
\|f\|_{st,1} = \|H^{\x(s/2)} \x f\x H^{\x(t/2)}\|_{00,1}
\leq \|H^{\x(s/2)}\|_{00,1} \|f\|_{00,\infty} \|H^{\x(t/2)}\|_{00,1}
\label{eq:chain-estimate} % (10)
\end{equation}
by the standard properties of trace-class operators, since 
$\|f\|_{00,\infty}$ is the norm of $ZWf$ as a bounded operator on
$L^2(\bR)$. So, since
$$
\|H^{\x(s/2)}\|_{00,1} = \duo{\one}{H^{\x(s/2)}} 
= \sum_{n=0}^\infty (2n + 1)^{s/2},
$$
which converges iff $s < -2$, we conclude 
from~\eqref{eq:chain-estimate} that $\sB_{0,0} \subset \sI_{s,t}$ if
$s < -2$, $t < -2$, and thus that $\sB_{p,q} \subset \sI_{s,t}$
(continuously) if $p > s + 2$, $q > t + 2$. Interpolating
$\sS_2 \subset \sB_{p,q} \subset \sI_{s,t}$ in~\eqref{eq:short-chain},
we see that we can replace $\sG_{s,t}$ by $\sI_{s,t}$ or $\sB_{s,t}$
in \eqref{eq:double-scale}. In effect, if we define
$$
\sI_{s,-\infty} := \bigcup_{t\in\bR} \sI_{s,t}\,,  \qquad
\sB_{s,-\infty} := \bigcup_{t\in\bR} \sB_{s,t}\,,
$$
we find that
$$
\sB_{p,-\infty} \subset \sI_{s,-\infty} \subset \sG_{s,-\infty} 
\subset \sB_{s,-\infty} \quad\text{if}\quad p > s + 2,
$$
and so
$$
\sM_L = \bigcap_{s\in\bR} \sG_{s,-\infty} 
= \bigcap_{s\in\bR} \sB_{s,-\infty}
= \bigcap_{s\in\bR} \sI_{s,-\infty}
$$
topologically, if all intersections have the natural (projective) 
topologies. In particular, the topology $\sT_3$ is induced by 
$\bigcap_{s\in\bR} \bigcup_{t\in\bR} \sB_{s,t}$.

\begin{thm} % 5
\label{th:idem-topologies}
The topologies $\sT_2$ and $\sT_3$ on $\sM_L$ coincide.
\end{thm}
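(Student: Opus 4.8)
The plan is to reduce the statement to a single-variable fact about operators on $\sS_1$ and then compare neighbourhood bases as in the proof of Theorem~\ref{th:same-topologies}. By Theorem~\ref{th:same-topologies} we already have $\sT_1=\sT_2$, so it suffices to prove $\sT_1=\sT_3$; and since $\sT_3$ is induced by $\bigcap_{s}\bigcup_{t}\sB_{s,t}$, I would work with the $\sB$-filtration and push everything through $ZW$. With $\sH_k=\sD((ZWH)^k)$ as in the proof of Theorem~\ref{th:odd-corners}, the definition $\|f\|_{st,\infty}=\|H^{\x(s/2)}\x f\x H^{\x(t/2)}\|_{00,\infty}$, the homomorphism property $ZW(H^{\x a}\x f\x H^{\x b})=(ZWH)^a(ZWf)(ZWH)^b$, and the fact that $\|\cdot\|_{00,\infty}$ is the $L^2(\bR)$-operator norm of $ZW(\cdot)$ combine to show that, writing $m=s/2$ and $n=-t/2$, $ZW$ is an isometry of $\sB_{2m,-2n}$ onto the Banach space $\sL(\sH_n,\sH_m)$ of bounded operators $\sH_n\to\sH_m$ (the bounded-operator counterpart of $ZW(\sG_{2m,-2n})=\HS(\sH_n,\sH_m)$). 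Hence $ZW$ carries $(\sM_L,\sT_3)$ onto $\sL(\sS_1)$ equipped with the projective topology, over $m$, of the inductive limits $\bigcup_n\sL(\sH_n,\sH_m)$, and carries $(\sM_L,\sT_1)$ onto $\sL_b(\sS_1)$. The theorem is thus reduced to showing that these two topologies on $\sL(\sS_1)$ agree.

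First I would settle the inclusion $\sT_1\le\sT_3$, which is the exact analogue of the first half of the proof of Theorem~\ref{th:same-topologies}. A basic seminorm of $\sL_b(\sS_1)$ is $p_{B,m}(A):=\sup_{\phi\in B}\|A\phi\|_m$ with $B$ bounded in $\sS_1$; on the $n$-th step one has $p_{B,m}(A)\le c_{B,n}\,\|A\|_{\sL(\sH_n,\sH_m)}$, where $c_{B,n}:=\sup_{\phi\in B}\|\phi\|_n<\infty$ because $B$ is bounded. Thus $p_{B,m}$ is bounded by a multiple of the norm on each Banach step, hence continuous on the inductive limit $\bigcup_n\sL(\sH_n,\sH_m)$, hence continuous on the projective limit; so every $\sL_b(\sS_1)$-seminorm is $\sT_3$-continuous and $\sT_1\le\sT_3$.

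The hard part will be the reverse inclusion $\sT_3\le\sT_1$: every zero-neighbourhood of the inductive limit $\bigcup_n\sL(\sH_n,\sH_m)$ must be shown to contain a neighbourhood of $\sL_b(\sS_1)$. Such a neighbourhood is ``diagonal'', prescribing on the $n$-th step only a ball of radius $r_n>0$ with the $r_n$ allowed to shrink arbitrarily fast, so no single seminorm $p_{B,m}$ sits visibly inside it. The plan is to split $A$ along the spectral projections $E_j$ of $ZWH$ onto $\bC h_j$, writing $A=\sum_j AE_j$ with $\|AE_j\|_{\sL(\sH_n,\sH_m)}=(2j+1)^{-n}\|Ah_j\|_m$, and to distribute the pieces among the steps of the inductive limit with absolutely summable scalar weights, choosing the bounded set $B$ to be a single ellipsoid in $\sS_1$ built from the radii $(r_n)$. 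The estimate that should make the weights summable---and so let one $B$ absorb the whole neighbourhood---is precisely the nuclearity of the scale, namely $\sum_j(2j+1)^{-2}<\infty$, which makes each inclusion $\sH_{k+1}\hookrightarrow\sH_k$ Hilbert--Schmidt; this is the point at which the van~Eijndhoven--de~Graaf machinery enters. The delicate book-keeping here, matching the decay of $B$ in each $\|\cdot\|_k$ against the radii $r_n$ while keeping the total weight bounded, is where I expect the real difficulty to lie. A cleaner but less elementary alternative would be to invoke a de~Wilde-type closed-graph theorem: $\sL_b(\sS_1)$ is reflexive, hence barrelled, while $\sT_3$, a countable projective limit of $(LB)$-spaces, is webbed, and $\sT_1\le\sT_3$ makes the graph of $\mathrm{id}:(\sM_L,\sT_1)\to(\sM_L,\sT_3)$ closed; checking the precise hypotheses (barrelledness, resp.\ ultrabornologicity, of $\sL_b(\sS_1)$ against webbedness of $\sT_3$) would then yield continuity of the identity, i.e.\ $\sT_3\le\sT_1$, completing the proof.
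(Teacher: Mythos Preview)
Your proposal is correct, and your ``cleaner but less elementary alternative'' via de~Wilde is in fact exactly what the paper does for the hard direction; the paper does not attempt the direct nuclearity bookkeeping you sketch first. The main differences are of framing rather than substance. You transfer everything through $ZW$ to operator language on $\sS_1$ and use the $\sB_{s,t}$-filtration, identifying $\sB_{2m,-2n}$ with $\sL(\sH_n,\sH_m)$; the paper stays on the phase-space side, compares $\sT_3$ directly with $\sT_2$ (not $\sT_1$), and uses the $\sG_{s,t}$-filtration together with the product inequality $\|S\x f\|_{st}\le\|S\|_{s,-q}\|f\|_{qt}$ for the easy direction. For the reverse inclusion the paper argues exactly as in your alternative: $(\sM_L,\sT_3)$ is strictly webbed (countable projective limit of countable inductive limits of Hilbert spaces), $(\sM_L,\sT_2)=(\sM_L,\sT_1)\simeq\sL_b(\sS_1)$ is Montel, and the already-established continuous identity $(\sM_L,\sT_3)\to(\sM_L,\sT_2)$ is then a homeomorphism by de~Wilde's open mapping theorem. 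Your operator-theoretic reformulation buys a cleaner picture of why the steps of the filtration are the ``right'' Banach spaces, and your direct spectral-decomposition idea, if carried through, would make the argument self-contained; the paper's route is shorter because it bypasses that bookkeeping entirely by appealing to de~Wilde. One small point: you phrase the de~Wilde step as a closed-graph theorem for $\mathrm{id}:(\sM_L,\sT_1)\to(\sM_L,\sT_3)$, which requires ultrabornologicity of $\sL_b(\sS_1)$; the paper instead applies the open-mapping form to the continuous surjection in the other direction, which is marginally more direct given what has already been shown.
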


\begin{proof}
Let us write
$$
(B;U) := \set{S \in \sM_L : S \x f \in U \text{ for all } f \in B},
$$
where $U$ is a neighbourhood of~$0$ in $\sS_2$ and $B$ is a bounded
set in $\sS_2$; then $(B;U)$ is a basic neighbourhood of~$0$
for~$\sT_2$.

A basic neighbourhood of~$0$ for $\sT_3$ is given by a subset 
$V \subset \sM_L$ such that $V = W \cap \sM_L$ where $W$ is a
neighbourhood of~$0$ in $\sG_{s,-\infty}$ for some $s \in \bR$. Then
$W \cap \sG_{s,-q}$ is a neighbourhood of~$0$ in $\sG_{s,-q}$ for all
$q \in \bR$ and so contains a set of the form
$\set{S \in \sG_{s,-q} : \|S\|_{s,-q} < C(q)}$ for all~$q$.

Given $(B;U)$, a neighbourhood of~$0$ for $\sT_2$, we may suppose that 
$U = \set{g \in \sS_2 : \|g\|_{st} \leq \dl}$, for some 
$s,t \in \bR$, $\dl > 0$, and that
$$
B = \set{f \in \sS_2
: \|f\|_{qp} \leq A(q,p) \text{ for all } q,p \in \bR},
$$
where $A(q,p) > 0$ is a suitable function. Set
$$
V := \bigcup_{q\in\bR} \set{S \in \sM_L \cap \sG_{s,-q}
: \|S\|_{s,-q} \leq \dl/A(q,t)}
$$
Then $V = W \cap \sM_L$, where $W \cap \sG_{s,-q}$ contains a ball of 
radius $\dl/A(q,t)$ for all $q$. Let $f \in B$, 
$S \in V \cap \sG_{s,-q}$; then we get
$$
\|S \x f\|_{st} \leq \|S\|_{s,-q} \|f\|_{qt}
\leq \biggl( \frac{\dl}{A(q,t)} \biggr) A(q,t) = \dl,
$$
so that $V \cap \sG_{s,-q} \subset (B;U)$ for all $q$, and thus 
$V \subset (B;U)$.

On the other hand, we observe that each Hilbert space $\sG_{s,t}$ 
is ``strictly webbed'' in the sense of de~Wilde~\cite{Koethe79}; this 
property is preserved under countable inductive limits (here 
$\sG_{s,-\infty} = \bigcup_{n\in\bN} \sG_{s,-n}$) and under countable 
projective limits; hence 
$(\sM_L,\sT_3) = \bigcap_{m\in\bN} \sG_{m,-\infty}$ is strictly
webbed.

Since $(\sM_L,\sT_1)$ is a Montel space~\cite{Grothendieck55} and
$\sT_1 = \sT_2$, $(\sM_L,\sT_2)$ is barrelled and complete:
furthermore, the identity map $\:(\sM_L,\sT_3) \to (\sM_L,\sT_2)$ is
continuous, by the first part of the proof. Now de~Wilde's open
mapping theorem~\cite{Koethe79} shows that this map is a
homeomorphism, and so $\sT_3 = \sT_2$.
\end{proof}

\begin{remk} % 6
As a corollary, we find that the space $\sL_b(\sS_1)$ is strictly
webbed.
\end{remk}

\begin{remk} % 7
We may summarize the topological properties of the Moyal algebra as
follows:
\begin{enumerate}
\item[(i)]
$\sM_L$ and~$\sM_R$ are complete, nuclear, reflexive locally convex
algebras with a hypocontinuous multiplication;
\item[(ii)]
$\sM$ is a complete, nuclear, semireflexive locally convex $*$-algebra
with a hypocontinuous multiplication and a continuous involution;
\item[(iii)]
using the technique outlined in Sec.~4 of~\textbf{I}, it is readily
seen that $\sM_L$, $\sM_R$ and~$\sM$ are Fourier-invariant normal
spaces of distributions.
\end{enumerate}
\end{remk}

\begin{remk} % 8
The technique of filtrating $\sS'_2$ by Hilbert spaces used here to
define $\sT_3$ may be employed to show that the dual space $\sM_L'$
can be represented as a dense ideal in $\sM_L$ (with a continuous
multiplication). Indeed, it can be shown that
$\sM_L' = \bigcup_{t\in\bR} \bigcap_{s\in\bR} \sG_{s,t}$ and that
$$
\sM_L' = \set{f \x T : f \in \sS_2,\ T \in \sS'_2} \subset \sO_T;
$$
it follows that $\sS_2 \subset \sM_L' \subset \sM_L$ and that $\sM_L'$
is an ideal. This we do in a following paper~\cite{Charon}.
\end{remk}

\section{Distributions corresponding to trace-class and bounded operators} % 5
\label{sec:trace-class}

The intermediate spaces $\sG_{s,t}$ and $\sB_{s,t}$ are useful for
several purposes. We may, for example, obtain information about
certain functions or tempered distributions by determining in which of
these spaces they lie. For instance, the identity $\one$ for the
twisted product lies in $\sB_{0,0}$ (of~course), but we may also
compute from~\eqref{eq:many-norms} that, since
$\one = \sum_{n=0}^\infty f_{nn}$, then
$$
\|\one\|_{st}^2 = \sum_{n=0}^\infty (2n+1)^{s+t}
$$
and hence $\one \in \sG_{s,t}$ iff $s + t < -1$.

The space $\sI_{0,0}$ corresponds to the trace-class operators on
$L^2(\bR)$. The question of which functions give rise to nuclear
operators, via the Weyl correspondence, has been studied by
Daubechies \cite{Daubechies83,Daubechies80}. She has identified, for
the present case of a two-dimensional phase space, a class of spaces
$\sW^r$ such that $\sW^r \subset \sI_{0,0}$ for $r > 1$, using a
coherent-state representation of Quantum Mechanics. From our point of
view, $\sW^r$ essentially consists of functions $f$ on~$\bR^2$ with
$\duo{f^*}{A^rf}$ finite, where $Af := H \x f + f \x H$. Since
$f \mapsto H \x f$ and $f \mapsto f \x H$ are commuting positive
operators (on $\sG_{0,0}$, say), we find that
$$
0 \leq \duo{f^*}{H^{\x r} \x f} \leq \duo{f^*}{A^rf}  \word{and}
0 \leq \duo{f^*}{f \x H^{\x r}} \leq \duo{f^*}{A^rf}
$$
for $r \geq 1$, and thus $\sW^r \subset \sG_{r,0} \cap \sG_{0,r}$. We
can now show an improved result.

\begin{thm} % 6
\label{th:trace-class}
$\sG_{r,0} \cup \sG_{0,r} \subset \sI_{0,0}$ if $r > 1$.
\end{thm}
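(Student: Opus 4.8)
The plan is to exploit the very definition $\sI_{0,0} = \sG_{0,0} \x \sG_{0,0}$, which is the twisted-product incarnation of the classical fact that a trace-class operator is precisely a product of two Hilbert--Schmidt operators. The whole argument then reduces to producing, for each $f$ in $\sG_{r,0}$ or in $\sG_{0,r}$, a factorization of $f$ into two factors that each lie in $\sG_{0,0} = \HS(L^2(\bR))$.

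First I would record the computation that pins down the threshold $r > 1$. Since $\one = \sum_n f_{nn}$ and $H \x f_{mn} = (2m+1)f_{mn}$ by \eqref{eq:eigen-basis}, the central powers are diagonal, $H^{\x\al} = \sum_m (2m+1)^\al f_{mm}$, and act on the left by $H^{\x\al} \x f_{mn} = (2m+1)^\al f_{mn}$. From \eqref{eq:many-norms} we then get $\|H^{\x(-r/2)}\|_{00}^2 = \sum_m (2m+1)^{-r}$, which is finite exactly when $r > 1$. Thus $H^{\x(-r/2)} \in \sG_{0,0}$ iff $r > 1$, and this is the only place the hypothesis is used.

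Now take $f = \sum_{m,n} c_{mn} f_{mn} \in \sG_{r,0}$. I would set $g := H^{\x(r/2)} \x f = \sum_{m,n} (2m+1)^{r/2} c_{mn} f_{mn}$; by \eqref{eq:many-norms} its Hilbert--Schmidt norm is $\|g\|_{00} = \|f\|_{r,0} < \infty$, so $g \in \sG_{0,0}$, while $H^{\x(-r/2)} \x g = f$ termwise because $H^{\x(-r/2)} \x H^{\x(r/2)} = \one$. Hence $f = H^{\x(-r/2)} \x g \in \sG_{0,0} \x \sG_{0,0} = \sI_{0,0}$. For $f \in \sG_{0,r}$ the identical trick works with the central factor on the right: $g := f \x H^{\x(r/2)} \in \sG_{0,0}$ has $\|g\|_{00} = \|f\|_{0,r}$, and $f = g \x H^{\x(-r/2)} \in \sG_{0,0} \x \sG_{0,0} = \sI_{0,0}$. (Alternatively, one may deduce the second inclusion from the first by conjugation, noting that $f \mapsto f^*$ carries $\sG_{0,r}$ onto $\sG_{r,0}$; but the direct factorization avoids any discussion of how $*$ interacts with the identification $\sI_{0,0}\simeq$ trace-class.)

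I do not anticipate a serious obstacle. Once $\sI_{0,0}$ is read as the set of twisted products of two Hilbert--Schmidt elements, the theorem is essentially the single observation that $H^{\x(-r/2)}$ is Hilbert--Schmidt for $r > 1$. The only points deserving a line of care are the termwise identity $H^{\x(-r/2)} \x (H^{\x(r/2)} \x f) = f$, immediate from the diagonal action of $H^{\x\al}$ on the $f_{mn}$ together with $H^{\x\al} \x H^{\x\bt} = H^{\x(\al+\bt)}$, and the fact that these twisted products of genuine distributions are well defined, which is guaranteed since the two factors sit in the mutually dual Hilbert spaces realizing $\sG_{0,0}$. The result improves the earlier estimate $\sW^r \subset \sG_{r,0} \cap \sG_{0,r}$ by enlarging the left-hand side to the union and sharpening the target from $\sG_{0,0}$ to $\sI_{0,0}$.
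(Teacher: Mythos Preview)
Your proof is correct and follows the same overall strategy as the paper: factor $f \in \sG_{r,0}$ as a twisted product of a diagonal element in $\sG_{0,0}$ and a second element of $\sG_{0,0}$, with the hypothesis $r>1$ entering only through the convergence of $\sum_m (2m+1)^{-r}$.

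The one difference is in the choice of diagonal factor. You use the \emph{universal} element $H^{\x(-r/2)} = \sum_m (2m+1)^{-r/2} f_{mm}$, so that $f = H^{\x(-r/2)} \x \bigl(H^{\x(r/2)} \x f\bigr)$. The paper instead builds an $f$-dependent diagonal: with $d_m^2 := (2m+1)^r \sum_n |c_{mn}|^2$ it sets $g := \sum_m d_m f_{mm}$ and $h := \sum_{m,n} (c_{mn}/d_m)\,f_{mn}$, so that $f = g \x h$. Both factorizations yield $\|g\|_{00} = \|f\|_{r0}$ for one factor and $\|\cdot\|_{00}^2 = \sum_m (2m+1)^{-r} = (1-2^{-r})\ze(r)$ for the other, hence the identical continuity estimate $\|f\|_{00,1} \leq \bigl((1-2^{-r})\ze(r)\bigr)^{1/2}\|f\|_{r0}$. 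Your version is a touch cleaner because the left factor does not depend on~$f$ and the identity $H^{\x(-r/2)} \x H^{\x(r/2)} = \one$ makes the recovery of $f$ transparent; the paper's version stays entirely inside $\sG_{0,0}$ from the outset and avoids invoking the action of the unbounded $H^{\x(r/2)}$ on $\sG_{r,0}$, which you correctly justify via the isometry $\|H^{\x(r/2)} \x f\|_{00} = \|f\|_{r,0}$ already recorded in Sec.~\ref{sec:many-filtrations}.
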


\begin{proof}
We need only show that $\sG_{r,0} \subset \sI_{0,0}$. Take 
$f \in \sG_{r,0}$, and write
$$
f = \sum_{m,n=0}^\infty c_{mn}f_{mn} \word{with} 
\|f\|_{r0}^2 = \sum_{m,n=0}^\infty (2m+1)^r |c_{mn}|^2 \text{ finite}.
$$
Define $d_m$ by $d_m \geq 0$, 
$d_m^2 := (2m+1)^r \sum_{n=0}^\infty |c_{mn}|^2$. Then 
$\sum_{m=0}^\infty d_m^2 = \|f\|_{r0}^2$ so that 
$g := \sum_{m=0}^\infty d_m f_{mm}$ lies in~$\sG_{0,0}$. Define
$h := \sum_{m,n=0}^\infty b_{mn} f_{mn}$, where
$b_{mn} := c_{mn}/d_m$. We now observe that
$$
\sum_{m,n=0}^\infty |b_{mn}|^2 
= \sum_{m=0}^\infty \sum_{n=0}^\infty \frac{|c_{mn}|^2}{d_m^2} 
= \sum_{m=0}^\infty (2m + 1)^{-r} = (1 - 2^{-r}) \ze(r)
$$
so that $h \in \sG_{0,0}$ for $r > 1$. Thus $g \x h$ is defined and
lies in $\sI_{0,0}$, and it is clear that $f = g \x h$.

Furthermore, $\|g \x h\|_{00,1} \leq \|g\|_{00}\, \|h\|_{00}$ by a
well-known property of trace-class operators (transferred via the
isomorphism $ZW$ to the present context), so we get the estimate
$$
\|f\|_{00,1} \leq \bigl( (1 - 2^{-r}) \ze(r) \bigr)^{1/2} \|f\|_{r0}
$$
for $f \in \sG_{r,0}$. (Replacing $f$ by $f^*$, we see that the
analogous estimate is valid for $f \in \sG_{0,r}$.)
\end{proof}

\begin{remk} % 9
In the same vein, we observe that all distributions in 
$\sB_{0,0}$ lie in $\bigl( \bigcup_{r>1} \sG_{-r,0} \bigr)
\cap \bigl( \bigcup_{r>1} \sG_{0,-r} \bigr)$ with estimates:
\begin{align*}
\|T\|_{-r,0}
&\leq \bigl( (1 - 2^{-r}) \ze(r) \bigr)^{1/2} \|T\|_{00,\infty},
\\
\|T\|_{0,-r}
&\leq \bigl( (1 - 2^{-r}) \ze(r) \bigr)^{1/2} \|T\|_{00,\infty}.
\end{align*}
This is the tighest constraint of which we are aware on the class of
distributions corresponding to bounded operators by the Weyl rule. We
remark that our proofs are simpler than those of~\cite{Daubechies83}
since they merely involve manipulation of the double series introduced
in~\textbf{I}.
\end{remk}

\begin{remk} % 10
We have noted in~\textbf{I} that the twisted product of two square
integrable functions in $\bR^2$ lies in $C_0(\bR^2)$. Thus, if
$f \in \bigcup \set{\sG_{s,t} : s \geq 0,\ t \geq 0,\ s + t \geq 2}$,
then $f \in C_0(\bR^2)$. Then the Leibniz formula assures us that
$f \x g \in C_0^m(\bR^2)$ whenever
$$
f,g \in \bigcup \set{\sG_{s,t} : s > 2m,\ t > 2m,\ s + t > 4m + 2};
$$
analogously to what happens in the usual Sobolev spaces, the
distributions in $\sG_{s,t}$ grow more regular as $s$, $t$ become
larger in a suitable way.
\end{remk}

\section{Outlook} % 6

The formalism developed in~\textbf{I} and the present paper puts
forward a mathematical framework for phase-space quantum mechanics, in
which the usual calculus of unbounded operators is replaced by a
calculus of distributions on phase space, with some techniques of
locally convex space theory hovering in the background. Using this
framework, we have shown elsewhere~\cite{Triton} that the evolution
functions corresponding to any quadratic Hamiltonian on phase space
belong to the Moyal $*$-algebra $\sM$. To obtain more general results,
when the Schr\"odinger equation is not exactly solvable, we need an
appropriate spectral theorem for $\sM$ in order to apply semigroup
theory. As a step in this direction, we have identified the dual space
of~$\sM$ as a function space~\cite{Charon}. We hope to develop these
aspects further in a forthcoming paper.

\subsection*{Acknowledgements}

We are grateful for helpful correspondence in connection with the
present work from Seán Dineen, who brought Ref.~\cite{Vogt84} to our
attention, and also from John Horváth and Peter Wagner. We would
like to thank Prof.\ Abdus Salam and the International Centre for
Theoretical Physics, Trieste, for their hospitality during a stay in
which work was completed. We gratefully acknowledge support form the
Vicerrectoría de Investigación of the Universidad de Costa Rica.

\end{document}